\newenvironment{customthm}[1]
  {\innercustomthm}
  {\endinnercustomthm}
\DeclareMathAlphabet{\mathpzc}{OT1}{pzc}{m}{it}
\address{Department of Mathematics, Massachusetts Institute of Technology, 77 Massachusetts Avenue, Cambridge, MA 02138}
\email{dhruvr@mit.edu}
\newtheorem{theorem}{Theorem}[subsection]
\newtheorem{corollary}[theorem]{Corollary}
\newtheorem{proposition}[theorem]{Proposition}
\newtheorem{definition}[theorem]{Definition}
\newtheorem{quasi-theorem}[theorem]{Quasi-Theorem}
\newtheorem{blank remark}[theorem]{}
\newtheorem{rem1}[theorem]{Remark}
\newenvironment{remark}{\begin{rem1}\em}{\end{rem1}}
\newtheorem{not1}[theorem]{Notation}
\newcommand{\CC} {{\mathbb C}}          
\newcommand{\NN} {{\mathbb N}}		
\newcommand{\PP}{\mathbb{P}}         
\newcommand{\RR} {{\mathbb R}}		
\newcommand{\ZZ} {{\mathbb Z}}
\newcommand{\Hom}{\operatorname{Hom}}
\DeclareMathOperator{\val}{val}
\DeclareMathOperator{\spec}{Spec}
\newcommand{\cal}{\mathcal}
\def\cM{{\cal M}}
\newcommand{\plC}{\scalebox{0.8}[1.3]{$\sqsubset$}}
\newcommand{\Mbar}{\overline{\cM}}
\def\trop{\mathrm{trop}}
\def\an{\mathrm{an}}
\def\blfootnote{\xdef\@thefnmark{}\@footnotetext}
\title[Skeletons of stable maps II]{{\larger \larger S}keletons of stable maps II: {\larger \larger S}uperabundant geometries}
\author[Dhruv Ranganathan]{\vspace{-0.0in}{{\larger D}{\smaller hruv}\ \ {\larger R}{\smaller anganathan}}}
\date{}
\begin{document}

\begin{abstract}
We implement new techniques involving Artin fans to study the realizability of tropical stable maps in superabundant combinatorial types. Our approach is to understand the skeleton of a fundamental object in logarithmic Gromov--Witten theory -- the stack of prestable maps to the Artin fan. This is used to examine the structure of the locus of realizable tropical curves and derive $3$ principal consequences. First, we prove a realizability theorem for limits of families of tropical stable maps. Second, we extend the sufficiency of Speyer's well-spacedness condition to the case of curves with good reduction. Finally, we demonstrate the existence of liftable genus $1$ superabundant tropical curves that violate the well-spacedness condition. 
\end{abstract}

\maketitle

\vspace{-0.2in}

\section{Introduction}

Central to the application of tropical techniques to questions in algebraic geometry are so-called \textit{lifting theorems}. Given a ``synthetic'' tropical object, such as a weighted balanced polyhedral complex, one must understand whether this object is the tropicalization of an algebraic variety. We deal in this paper with the case of curves. The tropical lifting question in this setting asks, \textit{when does an embedded tropical curve in $\RR^n$ arise as the tropicalization of an algebraic curve in a torus over a non-archimedean field?} This question becomes highly nontrivial in the so-called superabundant case and has been the primary obstacle to the application of tropical curve counting techniques in high genus settings. A tropical curve in $\RR^n$ encodes the combinatorial data in a degenerate logarithmic stable map to a toric variety. If the tropical curve is superabundant, i.e. if the tropical deformation space is larger than expected, the obstruction group of this degenerate logarithmic map is nonzero. As a result, such a map may not deform and the tropical curve may fail to be realizable. See Section~\ref{sec: TSM} for a precise definition of superabundance. The earliest realization theorems for superabundant curves are due to Speyer, who observed a subtle combinatorial condition guaranteeing the realizability of superabundant genus $1$ tropical curves~\cite{Sp07}. While there has been substantial additional work in the intervening years, the general question remains mysterious~\cite{BPR16,CFPU,KatLift,Ni09,NS06,Ni15,R15a,Tyo12}. 

In this note, we use recent technical breakthroughs in non-archimedean geometry and the theory of logarithmic maps to provide a conceptual framework in which the realizability question may be approached. To demonstrate the efficacy of this framework, we use it to give simple proofs of three new results for lifting tropical curves. The same framework provides new insight into the structure of realizability conditions more globally -- the locus of realizable tropical curves is given by a union of bend loci of a collection of tropical polynomials in the edge lengths of the tropical curve. 

\subsection{Statement of results} All valued fields appearing in this paper will have equicharacteristic zero. Throughout, the symbol $\plC$ will be used to denote an abstract tropical curve\footnote{We use $\mathscr C$, $C$ and $\plC$ to denote families of curves, single curves, and tropical curves respectively, choosing notation that best approximates the shape of these objects as found in the wild. We thank Dan Abramovich for this most creative of suggestions.}. We say that a parametrized tropical curve $[\plC\to \RR^n]$ is \textit{realizable} if there exists a smooth curve $C$ over a non-archimedean field and a map $[C\to \mathbb G_m^n]$ whose tropicalization is $[\plC\to \RR^n]$. 

\begin{customthm}{A}\label{thm: limiting-realizability}
Let $[\plC_t\to \RR^n]$ for ${t\in [0,1)}$  be a continuously varying family of parametrized tropical curves. Let $[\plC_1\to \RR^n]$ denote the limit of this family in the moduli space of parametrized tropical curves. If $[\plC_t\to \RR^n]$ is realizable for all $t\in [0,1)$, then the limiting map $[\plC_1\to \RR^n]$ is realizable. 
\end{customthm}

Our next result extends the reach of Speyer's well-spacedness condition to the case of elliptic curves with good stable reduction, see Definition~\ref{def: well-spaced}. Let $\plC$ be a tropical curve of genus $1$ with a unique genus $1$ vertex, so in particular the underlying graph of $\plC$ is a tree.  Denote by $\hat \plC$ the tropical curve obtained by first replacing the genus $1$ vertex with a genus $0$ vertex and then adding a self-loop of length $1$ at $v$. 

\begin{customthm}{B}\label{thm: good-reduction}
Let $[\plC\to \RR^n]$ be a parametrized tropical curve of genus $1$ with a unique genus $1$ vertex $v$. Assume that the star of $v$ in $[\hat\plC\to \RR^r]$ is realizable. If $[\plC\to \RR^n]$ is well-spaced, then $[\plC\to \RR^n]$ is realizable. 
\end{customthm}

The work of Speyer shows that the well-spacedness condition is sufficient for realizability of tropical genus $1$ curves. He also proves that this condition is also necessary, with the restriction that the curve is trivalent. The following result shows that outside the trivalent case, well-spacedness can be violated. This complements Baker, Payne, and Rabinoff's generalization of Speyer's condition~\cite[Theorem 6.9]{BPR16}. 

\begin{customthm}{C}\label{thm: non-well-spaced}
Let $n\geq 3$. There exist superabundant parametrized genus $1$ tropical curves $[\plC\to \RR^n]$ that lift to algebraic curves but violate the well-spacedness condition.
\end{customthm}


The new point of view taken in this paper is to attempt to understand the realizability locus inside the moduli space of all parametrized tropical curves as a global tropical geometric object. We do so by studying a fundamental object in logarithmic Gromov--Witten theory -- the space of logarithmic prestable maps to the Artin fan. This is inspired by the insights of Abramovich--Wise, Gross--Siebert, and Ulirsch. By synthesizing these ideas, we are led to the following result. 

Let $X$ be a toric variety with fan $\Delta$ and let $\mathscr
L^\circ_\Gamma(X)$ denote the moduli space of maps from smooth pointed
genus $g$ curves into $X$ with fixed contact orders with the toric
boundary along smooth marked points. In Section~\ref{sec: TSM} a
generalized extended cone complex $T_\Gamma(\Delta)$ is constructed
which parametrizes tropical stable maps with the analogous discrete
data. The generalized cone complex $T^\circ_\Gamma(\Delta)$ is the complement of the
extended faces and parametrizes maps from tropical curves where all
edge lengths are finite.

\begin{customthm}{D}\label{thm: moduli}
There is a continuous tropicalization map 
\[
\trop: \mathscr L_\Gamma^\circ(X)^{\an}\to T^\circ_\Gamma(\Delta),
\]
compatible with evaluation maps and forgetful maps to the moduli space of curves. The locus in $T^\circ_\Gamma(X)$ parametrizing the set of realizable tropical curves is a closed polyhedral set. If $\sigma^\circ\in T^\circ_\Gamma(X)$ is the relative interior of a cell, after passing to a finite cover of $\sigma^\circ$ by the interior of a cone $\tilde\sigma^\circ$, the locus of realizable curves in $\tilde\sigma^\circ$ is the union of bend loci of collections of tropical polynomials. 
\end{customthm}

\subsection{Further discussion} A number of experts have made the informal observation that the condition appearing in Speyer's realizability theorem -- that the minimum of a collection of numbers occurs at least twice -- \textit{resembles the tropical variety of a tropical ideal}. We view Theorem~\ref{thm: moduli} as giving a simple and rigorous explanation for this phenomenon.

While the proof of our lifting theorems only rely on compactness of the realizable locus, the tropical structure is useful for applications to enumerative geometry. This is illustrated, for instance, by the results of Len and the author~\cite{LR15}, in which the polyhedral structure of the realizability locus is used to derive multiplicities for tropical curve counting. 

The above Theorem~\ref{thm: moduli} also contributes to the study of tropical moduli spaces, which have received considerable interest in recent years. These results have aimed at an improved conceptual understanding of information contained in tropical moduli spaces, including applications to enumerative geometry~\cite{CMR14a,CMR14b,R15b} and to the geometry and topology of moduli 
spaces~\cite{CHMR,Cha15,CGP}. The novelty of the present paper is that the tropicalization of $\mathscr L^\circ_\Gamma(X)$ is studied as the tropicalization of a map to a certain toroidal stack -- the stack of prestable maps to the Artin fan. This is in sharp contrast to recent results on tropicalizations for moduli spaces, which have used toroidal structures on the spaces themselves.

The well-spacedness condition has inspired a great deal of research. However, to our knowledge, the results of Baker--Payne--Rabinoff~\cite[Theorem 6.9]{BPR16} are the only known nontrivial necessary conditions for realizability for non-maximally degenerate tropical curves, although it may be possible to extract such conditions using the methods of~\cite{KatLift}. By studying the limits of non-superabundant families of curves and applying Theorem~\ref{thm: limiting-realizability}, one can obtain sufficient conditions for lifting tropical curves in non-maximally degenerate situations, i.e. when vertices carry nonzero genus. Furthermore, Theorem~\ref{thm: good-reduction} exhibits the first instance of a sufficient condition for the realizability for non-maximally degenerate superabundant curves.

In addition to the work of Baker--Payne--Rabinoff mentioned above, Katz extracts a number of necessary conditions for realizability. These stem from interpreting the logarithmic tangent-obstruction complex for maps to toric varieties combinatorially for degenerate maps~\cite{KatLift}. A similar approach is used by Cheung, Fantini, Park, and Ulirsch to prove that in a large range of cases, non-superabundance is a sufficient condition for realizability~\cite{CFPU}. Note however, that limits of non-superabundant tropical curves can often become superabundant. As a result, Theorem~\ref{thm: limiting-realizability} extends the reach of these theorems as well. 

An implicit goal of this paper is to demonstrate the usefulness of the perspective on tropicalization arising from logarithmic prestable maps to Artin fans arising from the work of Abramovich, Ulirsch, and Wise~\cite{AW,U16,U14b} and the insights of Gross and Siebert~\cite{GS13}. Indeed, accepting this technical input, the reader will note that the proofs of our realizability theorems follow from reductions to existing theorems in tropical geometry.  

The first part of this project~\cite{R15b}, which was a chapter in author's doctoral dissertation, studies the tropicalization of the moduli space of logarithmic maps to toric varieties in genus $0$, and it is in this sense that the present paper is a sequel. Superabundance never appears in the genus $0$ setting, and the analogue of Theorem~\ref{thm: moduli} can be used to derive a number of consequences concerning the geometry of the space of maps. We refer to loc. cit. for details. We also note that a similar polyhedrality result as above has been proved by Tony Yu in the context of non-archimedean analytic Gromov--Witten theory using methods that are quite different from ours~\cite{Yu14b,Yu14a}. 

\begin{remark}
In the months between when the first version of this paper appeared on ar$\chi$iv and the final publication, there has been additional progress on tropical realizability. The framework established here has been used by Jensen and the author to prove realization theorems for superabundant tropical curves in the ``chain of cycles'' combinatorial type, in arbitrary genus. This has in turn found applications in Brill--Noether theory, see~\cite[Theorems A \& B]{JR17}. 
\end{remark}

\subsection*{Prerequisites} We assume familiarity with the fundamental concepts of Berkovich geometry and logarithmic structures. A rapid overview of the relevant concepts may be found in the preceding article~\cite[Section 2]{R15b}. We refer the reader to two excellent recent surveys in this area by Abramovich, Chen, and their collaborators~\cite{ACGHOSS,ACMUW}. 

\subsection*{Acknowledgements} During the preparation of this paper, I have benefited greatly from conversations with friends and colleagues including Dori Bejleri, Renzo Cavalieri, Dave Jensen, Sam Payne, Martin Ulirsch, and Ravi Vakil. I'd like to thank David Speyer, in particular, for sharing his insights on realizability in the good reduction case, see Remark~\ref{SpeyerRemark}. I'd also like to thank Dan Abramovich for pointing out a gap in an earlier version of this paper. During important phases of the project I was a student at Brown University and Yale University, and it is a pleasure to acknowledge these institutions here. The final version of this document was greatly improved as a result of the comments of two anonymous referees. Final revisions were completed while the author was a member at the Institute for Advanced Study in Spring 2017.

\noindent
This research was partially supported by NSF grant CAREER DMS-1149054 (PI: Sam Payne).

\section{Tropicalization for maps and their moduli}

\subsection{Logarithmic stable maps} Our central object of study is the moduli space of genus $g$ curves in a projective toric variety $X$ in a fixed curve class, meeting each torus invariant divisor $D_\rho$ at marked points with prescribed contact orders. We work with the compactification of this space of ramified curves, provided by the Abramovich--Chen--Gross--Siebert theory of logarithmic stable maps~\cite{AC11,Che10,GS13}.

Let $X$ be a projective toric variety with dense torus $T$, corresponding to a fan $\Delta$ in the vector space $N_\RR$ spanned by the cocharacters of $T$. Fix integers $g$ and $n$. We study the moduli space $\mathscr L(X)$ of families of minimal stable logarithmic morphisms
\[
f: (C,p_1,\ldots, p_n)\to X,
\]
from nodal pointed genus $g$ curves $(C,p_1,\ldots, p_n)$. 

\begin{remark}
Minimality is a condition on the characteristic monoids of the logarithmic structure on the base of the family of maps. The reader may think about this concept as follows. A logarithmic stable map $[f:C\to X]$ is a stable map in the category of logarithmic schemes. Consequently, given a \textit{logarithmic} scheme $S$, the moduli problem returns the collection of isomorphism classes of logarithmic stable maps over the base logarithmic scheme $S$. However, algebraic stacks are defined over the category of schemes rather than logarithmic schemes. A crucial and beautiful technical achievement of Abramovich, Chen, Gross, and Siebert was to identify an \textit{algebraic} stack $\mathscr L(X)$ that represented a \textit{logarithmic} moduli problem -- the moduli problem for minimal maps. Given a scheme $\underline S$ without a chosen logarithmic structure and a map $\underline S\to \mathscr L(X)$, there is a \textit{minimal} logarithmic structure $\mathscr M_S$ that can be placed on $\underline S$, returning a diagram in the logarithmic category
\[
\begin{tikzcd}
\mathscr C \arrow{d}\arrow{r} & X \\
(\underline S,\mathscr M_S), & \\
\end{tikzcd}
\]
which is a family of logarithmic stable maps. Families obtained by such logarithmic structures are referred to as \textit{minimal} logarithmic stable maps. The {algebraic} stack $\mathscr L(X)$ carries a universal logarithmic structure, and it is by pulling back this structure that we obtain the minimal logarithmic structure $\mathscr M_S$ as above. One can thus interpret this moduli space $\mathscr L(X)$ as a parameter space for minimal logarithmic stable maps, and thus as a stack over the category of schemes. Given a different logarithmic structure $\mathscr M_S'$ on $\underline S$, the universal property of minimality ensures that any map $(\underline S, \mathscr M_S')\to \mathscr L(X)$ factors uniquely through a minimal family. We refer the reader to~\cite[Section 1]{GS13} for an explicit description of these monoids and to~\cite[Section 2]{AC11} for a conceptual discussion.
\end{remark}

Stability for logarithmic maps amounts to stability of the underlying map -- all contracted rational components must have at least $3$ special points and all contracted elliptic components must have at least $1$ special point. We will also fix the contact orders on $C$. The contact order will be recorded by a function
\[
\bm{c}: \{p_1,\ldots,p_n\}\to \Delta(\NN),
\]
where $\Delta(\NN)$ denotes the integral points of the fan $\Delta$. If $\bm{c}(p_i) = e\cdot v_\rho$ where $v_\rho$ is a primitive generator of the ray corresponding to $D_\rho$, then the curve has contact order $e$ along $D_\rho$ at $p_i$. Note that by Fulton--Sturmfels' description of the Chow cohomology of a complete toric variety as Minkowski weights~\cite{FS97}, the data of $\bm c$ determines an operational curve class $f_\star[C]\in A_{1}(X;\ZZ)$. 


We package the discrete data by the symbol $\Gamma = (g,n,\bm c)$. Let $\mathscr L_\Gamma(X)$ denote the moduli space of minimal logarithmic maps carrying discrete data $\Gamma$ and let $\mathscr L^\circ_\Gamma(X)$ be the locus on which the logarithmic structure is trivial. The following result is established by Abramovich--Chen~\cite[Section 5]{AC11} and Gross--Siebert~\cite[Corollary 4.2]{GS13}.

\begin{theorem}
The moduli space $\mathscr L_\Gamma(X)$ of minimal logarithmic stable maps with discrete data $\Gamma$ is a proper logarithmic algebraic stack with projective coarse moduli space. 
\end{theorem}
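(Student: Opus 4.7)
The plan is to verify the four standard properties in turn: algebraicity as a logarithmic algebraic stack, boundedness (hence finite type), the valuative criterion for properness together with separatedness, and the projectivity of the Keel--Mori coarse moduli space. Throughout one works relative to the algebraic stack $\fM_{g,n}$ of prestable $n$-pointed curves of genus $g$, equipped with its canonical logarithmic structure coming from the nodes and markings; Olsson's theorem ensures that the category of logarithmic structures over a base is itself algebraic, so this relative point of view is permissible.

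For algebraicity, I would realize $\mathscr L(X)$ as an open substack of the stack $\mathrm{Hom}^{\mathrm{log}}(\mathscr C/\fM_{g,n},X)$ of logarithmic morphisms from the universal log curve $\mathscr C$ to the logarithmic toric variety $X$. Representability of such Hom-stacks in the logarithmic category, in the form developed by Olsson and Wise, gives algebraicity of the ambient stack. Minimality of a log stable map is a condition on the characteristic monoid at each geometric point; Abramovich--Chen (for rank one targets) and Gross--Siebert (in general) show that this cuts out an open substack, and fixing the discrete data $\Gamma$ isolates a union of open and closed pieces.

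Boundedness proceeds in two steps. The discrete data $\Gamma$ produces, via the Fulton--Sturmfels description of the Chow cohomology of $X$, a curve class in $A_1(X;\ZZ)$, so classical Kontsevich theory bounds the underlying stable maps; over this bounded stack, the possible minimal log enhancements are classified by the combinatorial types of tropical stable maps with data $\Gamma$, of which there are only finitely many. The valuative criterion is addressed as follows: given a minimal logarithmic stable map over the generic point $\Spec K$ of a DVR, apply semistable reduction (after a finite base change) to extend the underlying map to $\Spec R$, read off the combinatorial type of the resulting degeneration, and use this combinatorial type to pin down the minimal characteristic monoid at the special point. Kato's theorem on logarithmic smoothness then promotes the characteristic monoid to an honest log structure and produces the required extension; uniqueness -- hence separatedness -- follows from the universal property satisfied by minimal log structures.

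Finally, Keel--Mori applied to the proper algebraic stack yields a proper coarse moduli space. For projectivity, I would pull back a very ample polarization $\mathcal O_X(1)$ and tensor against powers of the log canonical bundle of the universal curve to form a relative polarization; Kollár's semipositivity results, or the Cornalba--Harris method adapted to stable maps (as in the original Fulton--Pandharipande treatment of Kontsevich spaces), then produce an ample line bundle descending to the coarse space. The main obstacle is the valuative criterion: although semistable reduction extends the coarse map, matching the minimal log structure across the special fiber requires a careful interplay between the combinatorics of tropical degenerations and Kato's logarithmic lifting theorems, and it is here that the bulk of the technical work of~\cite{AC11,GS13} is concentrated.
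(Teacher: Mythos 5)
This theorem is not proved in the paper at all: it is quoted verbatim from the literature, with the proof attributed to Abramovich--Chen~\cite[Section 5]{AC11} and Gross--Siebert~\cite[Corollary 4.2]{GS13}. So there is no in-paper argument to compare against; your proposal has to be judged against those references.

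At the level of strategy, your outline is faithful to what those papers actually do: algebraicity via Olsson--Wise logarithmic Hom-stacks relative to the stack of prestable curves, minimality (``basicness'') as an open condition, boundedness by combining the Fulton--Sturmfels curve class with finiteness of combinatorial types, and properness via stable reduction plus an extension of the minimal log structure. Two caveats. First, for projectivity of the coarse space the references do not rerun a Cornalba--Harris/Koll\'ar semipositivity argument: the forgetful map from $\mathscr L_\Gamma(X)$ to the ordinary Kontsevich space $\overline{M}_{g,n}(X,\beta)$ is representable and finite, so projectivity is simply pulled back from Fulton--Pandharipande; your route is more roundabout and would need its own verification. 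Second, and more importantly, what you have written is a plan rather than a proof: the steps you defer --- constructing the minimal characteristic monoid from the combinatorial type, showing it glues to an actual log structure over the special fiber, and proving uniqueness of the extension --- are exactly where the entire technical content of~\cite{AC11,GS13} lives. You acknowledge this in your last sentence, which is honest, but it means the proposal cannot stand on its own; it is an accurate reading guide to the cited proofs, which is also all the paper itself offers.
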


Note that in this paper, we will often work with logarithmic maps over valuation rings $\spec(R)$ that are not discretely valued. The natural logarithmic structure in this case is coherent, but not fine. In this case, one may approximate $R$ by sub-DVR's and pass to a limit, see~\cite[Appendix A.1]{AW}. Alternatively, in Theorem~\ref{thm: limiting-realizability}, one can pass to a subsequence of $[0,1)$ converging to $1$, parametrizing tropical maps with rational edge lengths. This will eliminate the need to work with maps over non-discrete valuation rings in the sequel. 

\subsection{Tropical stable maps}\label{sec: TSM}
The purpose of this section is to construct a parameter space for tropical stable maps, which will serve as the target of our tropicalization map. 
 
 \begin{definition}A \textbf{pre-stable $n$-pointed tropical curve of genus $g$}, denoted $\plC$, is a finite graph $G({\plC})$ and three additional data:
\begin{enumerate}
\item To each edge of $e\in G(\plC)$, a length $\ell(e)\in \RR_{\geq 0}\sqcup \{\infty\}$ such that if $e$ is a marked leaf-edge, $\ell(e) = \infty$.
\item To every vertex $v\in G(\plC)$ a genus $g(v)\in \ZZ_{\geq 0}$.
\item A labeling of a subset of the $1$-valent genus $0$ vertices of $\plC$ by the set $\{p_1,\ldots, p_n\}$. 
\end{enumerate}
The \textit{genus} of $\plC$ is defined to be 
\[
g(\plC) = h^1(\plC)+\sum_{v\in \plC} g(v).
\]
The tropical curve $\plC$ is made into a topological space as follows. Each edge $e$ of finite length $\ell(e)$ is identified with $[0,\ell(e)]$. Marked leaf-edges are metrized as $[0,\infty]$. Unmarked edges $e$ with $\ell(e) = \infty$ are topologized as as two glued intervals $[0,\infty]\sqcup_\infty[0,\infty]$. We will refer to a curve $\plC$ whose lengths are finite away from the marked leaves as \textbf{smooth}.
\end{definition}

Informally, $\plC$ is often thought of as being a metric space away from the infinite points, and as a space with a ``singular'' metric when including the infinite points. 

\begin{remark}
The terminology of ``smooth'' here is motivated by the following fact. Given a tropical curve $\plC$, one can consider any family $\mathscr C$ of marked prestable curves over a valuation ring $\spec(R)$, such that the skeleton of $\mathscr C$ is $\plC$. The general fiber $\mathscr C_\eta$ of this family is smooth if and only if the non-leaf edge lengths of $\plC$ are all finite, i.e. if $\plC$ is smooth in the above sense. To see this, consider an edge $e$ of $\plC$ corresponding to a node $q$ of $\mathscr C_0$. Formally locally near $q$, the total family may be described as $xy = f$, where $f\in R$, and the valuation of $f$ is identified with the length of $e$. The valuation of $f$ is infinity if and only if $f$ is zero. In turn $f$ is zero if and only if the node $q$ persists in the generic fiber of $\mathscr C$, see~\cite{ACP,BPR16}. 
\end{remark}

Recall that a morphism $\phi: \Sigma_1\to\Sigma_2$ between polyhedral complexes is a map on the underlying point sets such that each polyhedron in $\Sigma_1$ is mapped to a polyhedron in $\Sigma_2$. 

\begin{definition}\label{def: tsm}
A \textbf{tropical stable map from a smooth tropical curve} is a continuous and proper morphism of polyhedral complexes
\[
f:(\plC,\{p_1,\ldots, p_n\}) \to (\overline \Delta,\partial \Delta)
\]
where $(\plC,\{p_1,\ldots, p_n\})$ is a smooth $n$-marked abstract tropical curve, such that the following conditions are satisfied.
\begin{enumerate}[(TSM1)]
\item For each edge $e\in \plC$, the direction of $f(e)$ is an integral vector. Moreover, upon restriction to $e$, has integral slope $w_e$, taken with respect to this integral direction. This integral slope is referred to as the \textbf{expansion factor} of $f$ along $e$. 
\item The map $f$ is balanced in the usual sense, i.e. at all points of $\plC$ the sum of the derivatives of $f$ in each tangent direction is zero.
\item\label{tsm3} The map $f$ is \textbf{stable}. That is, if $p\in \plC$ has valence $2$, then the image of $\mathrm{Star}(v)$ is not contained in the relative interior of a single cone of $\Delta$.
\end{enumerate}
We will usually suppress the markings from the notation, and simply write the map as $[\plC\to \overline \Delta]$.
\end{definition}

The following definition indexes the ``deformation class'' of a tropical stable map and is obtained by dropping the data of the lengths of the edges of $\plC$. 

\begin{definition}
The \textbf{combinatorial type} of a tropical stable map $[\plC\to \overline \Delta]$ is the data
\begin{enumerate}[(CT1)]
\item The finite graph model $G(\plC)$ underlying $\plC$.
\item For each vertex $v\in G(\plC)$, a cone $\sigma_v\in \Delta$ containing the image of $v$.
\item For each edge $e$, the slope $w_e$ of $f$ restricted to $e$ and the primitive vector $u_e$ in the direction of $f(e)$. 
\end{enumerate}
\end{definition}

\begin{definition}
The \textbf{recession type} of a combinatorial type $\Theta$ is obtained from $[\plC\to \overline \Delta]$ by collapsing all bounded edges of $\plC$ to a single vertex. That is, $\plC$ is a single vertex with genus $g$ and marked edges, and the marked edges are each decorated by a contact order. 
\end{definition}

The following proposition seems to be well-known to experts, and a formal proof in the $g = 0$ case may be found in~\cite[Proposition 2.1]{NS06} or~\cite[Proposition 3.2.1]{R15b}. We give an outline of the argument in general. 

\begin{proposition}
Let $\Theta$ be the combinatorial type of a tropical stable map. The set of all tropical curves $[\plC\to \overline \Delta]$ together with an identification with the type $\Theta$ is parametrized by a cone $\sigma_\Theta$. Furthermore, are finitely many combinatorial types with fixed recession type. 
\end{proposition}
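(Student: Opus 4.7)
The plan is to parametrize tropical stable maps of a fixed combinatorial type $\Theta$ by the bounded edge lengths together with the image of a chosen root vertex, and to verify that the cutting conditions are all linear-homogeneous in those parameters.

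First, I would fix $\Theta = (G, \{\sigma_v\}, \{(u_e, w_e)\})$ and pick a root vertex $v_0 \in G$. Any tropical stable map of type $\Theta$ is determined by the root position $f(v_0) \in \sigma_{v_0}$ together with the bounded edge lengths $\ell(e) \in \RR_{\geq 0}$, since each edge $e$ from $v$ to $v'$ is mapped with $f(v') - f(v) = w_e \ell(e) u_e$, so that each $f(v)$ becomes a linear function of $(f(v_0), (\ell(e)))$ obtained by walking along a path from $v_0$. This embeds the parameter space into the rational polyhedral cone $\sigma_{v_0} \times \RR_{\geq 0}^{E_b}$. The remaining conditions from Definition~\ref{def: tsm} to cut out are: for each independent cycle $\gamma$ in $G$, the closure condition $\sum_{e \in \gamma} \pm w_e \ell(e) u_e = 0$ in $N_\RR$, which is a linear equality in the $\ell(e)$; and, for each non-root vertex $v$, the requirement $f(v) \in \sigma_v$, which gives a finite collection of linear half-space inequalities in $(f(v_0), (\ell(e)))$. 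The integrality, balancing, and stability conditions TSM1--TSM3 are properties of $(u_e, w_e)$ and the cone assignment alone, so they are encoded in $\Theta$ and impose nothing further on the lengths. Since all defining conditions are linear and cone-homogeneous, the parameter space $\sigma_\Theta$ is a closed rational polyhedral cone.

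For the finiteness of combinatorial types with fixed recession $\Theta_0$, I would argue in three stages. First, once the underlying graph $G$ is fixed, balancing combined with the prescribed contact orders $\bm c(p_i)$ pins down the vector $\sum_{e \in S} w_e u_e$ for every cocycle $S$ of $G$; via a spanning-tree argument this bounds $w_e|u_e|$ on each bounded edge, forcing only finitely many choices of $(u_e, w_e)$. Second, there are only finitely many cones in $\Delta$, giving a finite choice of cone labeling for each vertex. The principal obstacle is thus to bound the graph $G$ itself. I would contract each maximal chain of $2$-valent genus-$0$ vertices of $G$ to a single edge; the result is a Deligne--Mumford-stable marked graph of genus $g$ with $n$ legs, and only finitely many such graphs exist. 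To control reinsertion of the $2$-valent vertices, note that each edge of the contracted graph maps to a segment along an affine line in $N_\RR$, and stability TSM3 forces every contracted $2$-valent vertex to sit at a point where this line crosses between cones of $\Delta$. An affine line meets the finite fan $\Delta$ in only finitely many pieces, so only finitely many such subdivisions are possible, completing the bound on graphs and hence on combinatorial types.
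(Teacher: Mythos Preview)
Your argument for the cone structure is correct and essentially equivalent to the paper's, though the paper chooses a more symmetric parametrization: rather than fixing a root vertex and walking along a spanning tree, it takes \emph{all} vertex positions together with all bounded edge lengths as coordinates, embedding $\sigma_\Theta$ into $\prod_{v\in V} \sigma_v \times \RR_{\geq 0}^{E}$ and cutting by the linear constraint $f(v_{e1}) - f(v_{e2}) = \ell_e w_e u_e$ for every bounded edge $e$. Your root-based description is a projection of this onto fewer coordinates and yields the same cone.

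The finiteness argument, however, has a real gap. The assertion that a spanning-tree argument bounds $w_e|u_e|$ on each bounded edge fails as soon as $b_1(G)>0$. Cocycle (cut) constraints determine the tuple $(w_e u_e)_e$ only modulo the cycle space of $G$, leaving $b_1(G)\cdot \dim N_\RR$ integer parameters unconstrained by balancing and the contact orders alone. For instance, take $G$ a triangle with one leg at each vertex: the three vertex balancing conditions are vector equations with one linear dependence among them, so the bounded-edge slopes range over an infinite affine lattice of integer solutions. Whether only finitely many of these admit a map with strictly positive edge lengths and vertices in the prescribed cones is exactly the nontrivial content of the finiteness statement, and your argument does not address it. The paper does not attempt a direct combinatorial proof of this step; it simply invokes the boundedness of combinatorial types for logarithmic stable maps proved by Gross and Siebert~\cite[Section~3.1]{GS13}, which uses the curve class and the fan structure in an essential way. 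Your final step, bounding the reinsertion of $2$-valent vertices by counting the cone-crossings of an affine line, is sound in principle but rests on the unestablished bound on the edge slopes.
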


\begin{proof}
Fix a combinatorial type $\Theta = [\plC\to \Delta]$ and let $V$ and $E$ be the vertex and bounded edge sets of $\plC$ respectively. To describe a particular tropical map $[f]$ in this combinatorial type is to assign to each $v_i$, a point $f(v_i)$ in the associated cone $\sigma_{v_i}$ and to each edge $e_j$, a real edge length $\ell_e$. In order that these assignments define a continuous and balanced piecewise-linear map to $\Delta$, we simply need to force that for every edge $e$ connecting to $v_{e1}$ and $v_{e2}$, 
\[
f(v_{e1})-f(v_{e2}) = \ell_ew_e.
\]
Here, in keeping with previous notation, $w_e$ is the vector slope prescribed by the combinatorial type. In other words, the set of tropical curves is the subcone of $\prod_{v\in V} \sigma_v \times \prod_{e\in E}\RR_{\geq 0}$ given by
\[
\sigma_\Theta = \left\{((f(v)_v,(\ell_e)_e)\in \prod_{v\in V} \sigma_v \times \prod_{e\in E}\RR_{\geq 0}: \textrm{For all $e = v_{e1}v_{e2} \in E$, } v_{e1}-v_{e2} = \ell_ew_e\right\}.
\]
These relations cut out a subcone and the result follows. The claimed finiteness follows from the boundedness of combinatorial types for logarithmic maps, as proved in~\cite[Section 3.1]{GS13}.
\end{proof}

Following~\cite[Proposition 2.14]{Mi03}, the \textit{overvalence} of a type $\Theta$ is defined as
\[
\mathrm{ov}(\Theta) = \sum_{p:\val(p)\geq 4} \val(p)-3.
\]
The expected dimension of this cone of tropical maps is 
\[
\dim \sigma_\Theta = (\dim(\Delta)-3)(1-b_1(\plC))+n-\mathrm{ov}(\Theta),
\]
where $b_1(\plC)$ is the first Betti number of the underlying graph of $\plC$. This dimension calculation explained, for instance, in~\cite[Section 1]{NS06}. Just as in the algebraic case, it can be deduced from an analysis of the combinatorial tangent-obstruction complex, i.e. the ``abundancy map'' in~\cite[Definition 4.1]{CFPU}. See also~\cite[Sections 2.4-2.6]{Mi03}. The expected dimension above is a lower bound for the dimension of $\sigma_\Theta$, but the actual dimension is often be larger. For instance, if cycles of $\plC$ are mapped into proper affine subspaces of $|\Delta|$ of high codimension, the dimension of the actual deformation space will be larger than the above expected dimension. 

\begin{definition}
A combinatorial type $\Theta$ is said to be \textbf{superabundant} if the dimension of $\sigma_\Theta$ is strictly larger than the expected dimension.
\end{definition}

Following~\cite[Sections 2.3-2.4]{ACP} there is a natural compactification of any cone $\sigma$ with integral structure obtained as follows. Let $S_\sigma$ be the monoid of integral linear functions on $\sigma$ that are non-negative, i.e. the dual monoid of $\sigma$. The cone $\sigma$ can be recovered as the space of monoid homomorphisms $\Hom(S_\sigma,\RR_{\geq0})$. The \textit{canonical compactification} of $\sigma$ is defined as
\[
\overline \sigma:=\Hom(S_\sigma,\RR_{\geq0}\sqcup \{\infty\}).
\]
Fixing a combinatorial type $\Theta$, the faces of the extended cone $\overline \sigma_\Theta$ parametrizes tropical stable maps where $\plC$ is possibly singular and the vertices of $\plC$ map to the extended faces of $\overline \Delta$. We will have no need to work with such maps directly, so we leave the precise formulation to an interested reader.

\begin{definition}
An \textbf{isomorphism} between two tropical stable maps maps $[\plC_1\to \Delta]$ and $[\plC_2\to \Delta]$ is an isometry of graphs, commuting with the vertex weights and with the map:
\[
\begin{tikzcd}
(\plC_1,p_1,\ldots,p_n)\arrow{dr} \arrow{rr} & & (\plC_2,q_1,\ldots,q_n) \\
& \overline \Delta \arrow{ur}. & 
\end{tikzcd}
\]
An isomorphism of a stable map with itself is said to be an \textbf{automorphism}. Similarly, an \textbf{automorphism of the combinatorial type $\Theta$} is an automorphism of the underlying finite graph $G(\plC)$ preserving the edge directions, their expansion factors, vertex weights, and the cones associated to each vertex. 
\end{definition}

The moduli cones for fixed combinatorial types described above form the local models for the space of tropical stable maps. The globalization is achieved by what are now standard techniques, introduced by Abramovich, Caporaso, and Payne~\cite[Section 4]{ACP} for the moduli spaces of curves. In the maps setting, this can be found in~\cite[Section 3]{R15b} in the genus $0$ case. The changes in higher genus are not substantive. Given any combinatorial type $\Theta$, the faces of $\sigma_\Theta$ also parametrize maps from tropical curves. A moduli cone $\sigma_{\Theta'}$ is a face of $\sigma_{\Theta}$ if and only if 
\begin{enumerate}
\item The source type $G(\plC')$ and $\Theta'$ is obtained from the source graph $G(\plC)$ of $\Theta$ by a (possibly trivial) sequence of edge contractions $\alpha: G\to G'$.
\item Given any vertex $v'\in G(\plC')$ and a vertex $v$ such that $\alpha(v) = v'$, then the cone $\sigma_{v'}$ is a face of $\sigma_v$.
\end{enumerate}
The moduli space of tropical stable maps is defined to be
\[
T_\Gamma(\Delta):= \varinjlim_{\Theta: \mathrm{stable}} (\overline\sigma_\Theta,j_\Theta),
\]
where the objects of the colimit are all combinatorial types of a give recession type. The maps $j_\Theta$ range over all identifications of faces, as detailed above, and over automorphisms of combinatorial types. We will refer to the image of an extended cone $\overline \sigma_\Theta$ in the colimit above as a \textit{cell} of the generalized cone complex.

The topological space constructed above has the structure of a
generalized extended cone complex in the sense of~\cite[Section
2]{ACP}. By forming the union of the images of the ordinary (i.e. non-compact) cones $\sigma_\Theta$ in $T_\Gamma(\Delta)$, we obtain the moduli space $T_\Gamma^\circ(\Delta)$, parametrizing those maps with smooth source graph. 

\subsection{Prestable tropical maps} For our later study, it will be convenient to relax the stability condition above. A \textit{prestable} tropical map to $\Delta$ is a map $[\plC\to \Delta]$ as in Definition~\ref{def: tsm}, but possibly violating the stability condition \textit{(TSM3)}. In other words, $[\plC\to \Delta]$ becomes a stable map after dropping finitely many $2$-valent vertices from $\plC$. There are infinitely many such destabilizations of any combinatorial type. However, there are no substantive changes to the construction above -- prestable tropical curves of a fixed combinatorial type are still parametrized by a cone, though of arbitrarily high dimension. Gluing these cones together exactly as above, we obtain a generalized extended cone complex 
\[
T^{\mathrm{pre}}_\Gamma(\Delta) : = \varinjlim_{\Theta: \mathrm{prestable}} (\overline\sigma_\Theta,j_\Theta).
\]
In analogy with toroidal embeddings that are locally of finite type, one might regard this cone complex as being locally of finite-type. 

The crucial technical connection between the logarithmic maps theory and tropical geometry comes from an elegant observation of Gross and Siebert which we now recall. Let $[C\to X]$ be a logarithmic stable map over a logarithmic point $\spec(P\to \CC)$. As explained at the beginning of this section, a map is said to be \textit{minimal} if the logarithmic structure given by $P$ on $\spec(\CC)$ coincides with the logarithmic structure obtained by pulling back the structure on the moduli space $\mathscr L_\Gamma(X)$ via the underlying map
\[
\spec(\CC)\to \mathscr L_\Gamma(X).
\]

The logarithmic stable map $[C\to X]$ has a well-defined \textit{combinatorial type}. The source graph $\Gamma$ is taken to be the dual graph of $C$, the vertices map to the cone $\sigma$ dual to the stratum containing the generic point of the corresponding component, and the expansion factors along edges are uniquely determined by the contact order, see~\cite[Section 1.4]{GS13}. If $\Theta$ is the combinatorial type of $[C\to X]$, let $T_\Theta$ denote the corresponding moduli cone of tropical curves of this combinatorial type and let $T_\Theta(\NN)$ be the integral points of this cone. 

\begin{theorem}[{Gross and Siebert~\cite[Section 1.5]{GS13}}]\label{thm: rem1.4}
Let $[f:C\to X]$ be a logarithmic (pre)-stable map over $\spec(P\to \CC)$ with combinatorial type $\Theta$. The map $[f]$ is minimal if and only if the dual monoid $\Hom(P,\NN)$ is isomorphic to the monoid $T_\Theta(\NN)$.
\end{theorem}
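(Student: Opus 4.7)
The plan is to exhibit the minimal base monoid $P_{\min}$ as an explicit pushout of local monoids associated to the vertices and edges of the dual graph, then to dualize and recognize the result as $T_\Theta(\NN)$ using the description of $\sigma_\Theta$ established in the earlier proposition.

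First I would work component-by-component. For each vertex $v$, the restriction of $f$ to the component $C_v$ lands in the toric stratum corresponding to the cone $\sigma_v$, so the characteristic sheaf of $X$ pulled back to the generic point of $C_v$ is, up to units, the sharp monoid $P_{\sigma_v} := \sigma_v^\vee \cap M$. A log map $C_v \to X$ over a base monoid $Q$ is then equivalent to a monoid homomorphism $P_{\sigma_v} \to Q$; dualizing, this is the same as an element of $\sigma_v \cap N$ inside $\Hom(Q,\NN)$, and it records the tropical position $f(v)$.

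Second, for each edge $e$ corresponding to a node $q_e$, the local smoothing equation $xy = \rho_e$ supplies an element $\rho_e \in Q$ whose image in $\Hom(Q,\NN)$ is the tropical edge length $\ell_e$. Matching the log map across the two branches at $q_e$ imposes the tropical balancing relation $f(v_{e1}) - f(v_{e2}) = \ell_e w_e$, where $w_e$ is the contact direction prescribed by $\Theta$; this is the nodal compatibility derived in~\cite[Section 1.4]{GS13}. Minimality is the requirement that $P_{\min}$ be generated by the images of the local monoids $P_{\sigma_v}$ together with the elements $\rho_e$ modulo precisely these node relations and nothing more.

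The pushout $P_{\min}$ dualizes to an equalizer, yielding
\[
\Hom(P_{\min},\NN) = \left\{\big((f(v))_v,(\ell_e)_e\big)\in \prod_v (\sigma_v \cap N) \times \prod_e \NN : v_{e1} - v_{e2} = \ell_e w_e \text{ for all } e\right\},
\]
which by the defining equations of $\sigma_\Theta$ given earlier coincides on the nose with $T_\Theta(\NN)$. Conversely, any log structure $Q$ supporting a stable map of type $\Theta$ receives compatible maps from all of the local data, so it factors uniquely through $P_{\min}$, which is the universal property characterizing minimality. The main technical obstacle is bookkeeping in the category of sharp, fine, saturated monoids: one must verify that passing to the saturation of the pushout neither adjoins new generators nor introduces hidden relations beyond the nodal compatibilities, since otherwise the dual monoid could fail to be exactly $T_\Theta(\NN)$. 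This check is essentially the content of Gross--Siebert's discussion in~\cite[Sections 1.4--1.5]{GS13}, after which the identification with $T_\Theta(\NN)$ is a direct unwinding of definitions.
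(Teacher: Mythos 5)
Your proposal is correct and follows essentially the same route as the paper, which likewise reduces the statement to Gross--Siebert's explicit description of the basic monoid (nodal smoothing parameters and vertex positions subject to the relations $f(v_{e1})-f(v_{e2})=\ell_e w_e$) together with the universal property of minimality. One small simplification you could note: the saturation worry at the end is harmless for the statement as given, since $\Hom(-,\NN)$ is insensitive to saturation of a fine monoid, so the dual of the (unsaturated) pushout already computes $T_\Theta(\NN)$.
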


This result follows from the fact that minimality of the map $[f]$ can be characterized by placing constraints on the characteristic monoids of the base, in this case $\spec(P\to\CC)$. As explained in~\cite[Section 1.6 \& Remark 1.21]{GS13}, the map forces certain ``minimal constraints'' that every such base monoid have to satisfy. This implies that there is a natural injective map $\Hom(P,\NN)\to T_\Theta(\NN)$. The universal property of minimality~\cite[Proposition 1.24]{GS13} forces that, if $P$ is minimal, the map is also surjective and thus an isomorphism. 

\noindent
The same result holds when $X$ replaced with its Artin fan $\mathscr A_X = [X/T]$. 

\subsection{Pointwise tropicalization for logarithmic stable maps}\label{sec: pointwise-trop} Our next goal is to construct a map 
\[
\trop: \mathscr L^\circ_\Gamma(X)^{\an} \to T^\circ_\Gamma(\Delta)
\]
from the analytification of the space of maps from smooth curves with prescribed contact orders, to the space of tropical maps.

Any point $x\in \mathscr L^\circ_\Gamma(X)^{\an}$ may be represented by a map
\[
\spec(K)\to \mathscr L^\circ_\Gamma(X),
\]
where $K$ is a valued field extension of $\CC$. After a ramified base change, the existence of the compactification $\mathscr L_\Gamma(X)$ of $\mathscr L^\circ_\Gamma(X)$ guarantees an extension to
\[
\spec(R)\to \mathscr L_\Gamma(X),
\]
where $R$ is the valuation ring of $K$. By pulling back the universal curve and universal map we obtain a diagram
\[
\begin{tikzcd}
(\mathscr C,s_1,\ldots,s_n) \arrow{r}{f} \arrow{d} & X \\
\spec(R), & 
\end{tikzcd}
\]
where the $s_i$ are horizontal sections determining marked points on the geometric fibers. By~\cite[Section 1.4]{BPR13}, this choice of model determines a retraction of the analytic generic fiber onto a tropical curve 
\[
\mathscr C_\eta^{\an}\to \plC,
\]
with a canonical continuous section 
\[
\plC\to \mathscr C_\eta^{\an}. 
\]
Similarly, there is a deformation retraction
\[
X^{\an}\to \overline \Delta,
\]
of the analytic toric variety onto its skeleton by~\cite{Pay09,Thu07}. Composing the section map with the natural map 
\[
\mathscr C_\eta^{\an}\to X^{\an}\to \overline \Delta,
\] 
we obtain a map $[f^\trop:\plC\to \overline \Delta]$. By~\cite[Theorem 3.3.2]{R15b}, this map is seen to be a tropical stable map from a smooth tropical curve.

\subsection{Tropicalization via the Artin fan} Let $X$ continue to denote a projective toric variety. The \textit{Artin fan} of $X$ is defined to be the stack quotient $\mathscr A_X = [X/T]$. Work of Ulirsch~\cite{U16,U14b} asserts that, at the level of underlying topological spaces, the generic fiber of the map $[X\to \mathscr A_X]$ is canonically identified with the extended tropicalization map 
\[
X^{\an}\to \overline \Delta,
\]
constructed in~\cite{Pay09,Thu07}. This is made precise by the following theorem.

Given a scheme or stack $Y$ defined over a valuation ring $R$, we will use $Y^{\an}_\circ$ to denote Raynaud's generic fiber functor applied to the formal completion of $Y$ along the maximal ideal of $R$. See~\cite{U14b,U16,Yu14b} for background on generic fibers of algebraic stacks. Note that if $Y$ is proper, the generic fiber coincides with the Berkovich analytification, and will drop the $\circ$ in the subscript. 

\begin{theorem}[{\cite[Theorem 1.1]{U14b}}] 
There is a canonical identification of extended cone complexes given by $\mu_\Delta: |\mathscr A(\Delta)^{\an}_\circ|\to N(\Delta)$, making the diagram
\[
\begin{tikzcd}
\phantom{1} & \left|{\mathscr A_X}^{\an}_\circ\right| \arrow{dd}{\mu_\Delta} \\
X^{\an} \arrow[swap]{dr}{\trop} \arrow{ru}{\textnormal{Stack Quotient}} & \phantom{1} \\
\phantom{1} & \overline \Delta \\
\end{tikzcd}
\]
commute. 
\end{theorem}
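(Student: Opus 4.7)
The plan is to prove this by a local-to-global argument on the fan $\Delta$, reducing the question to the description of the analytic generic fiber of a single affine toric stack $[U_\sigma/T]$ and then verifying that the resulting identifications glue consistently and commute with the Kajiwara--Payne tropicalization.

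First I would work one cone at a time. For a cone $\sigma \in \Delta$, let $S_\sigma = \sigma^\vee \cap M$ and $U_\sigma = \spec \CC[S_\sigma]$. Because $T$ acts on $U_\sigma$ through the character lattice, the affine Artin fan $[U_\sigma/T]$ is the quotient of a toric affine variety by its dense torus, and its points over a valuation ring $R$ with value group $\Gamma$ admit a functorial description: by descent, a morphism $\spec R \to [U_\sigma/T]$ is the same as a $T$-torsor on $\spec R$ equipped with a $T$-equivariant map to $U_\sigma$. Since every $T$-torsor over a valuation ring is trivial, and the action of $T(R)$ is transitive on the set of trivializations, the set of such morphisms up to isomorphism is precisely the set of monoid homomorphisms $S_\sigma \to R$ modulo the action of $T(R) = \Hom(M, R^\times)$. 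Passing to the quotient kills the units, and one is left with a monoid homomorphism $S_\sigma \to R/R^\times = \Gamma_{\geq 0} \sqcup \{\infty\}$. Identifying the target with $\RR_{\geq 0} \sqcup \{\infty\}$ via the valuation and varying the valued field, this gives a canonical bijection between points of $|[U_\sigma/T]_\circ^{\an}|$ and $\Hom(S_\sigma, \RR_{\geq 0} \sqcup \{\infty\}) = \overline \sigma$, which is exactly the extended cone attached to $\sigma$ in $\overline \Delta$. I would then verify that this bijection is continuous by recalling the definition of the topology on the generic fiber of a formal stack and matching it with the topology on $\overline \sigma$.

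Next I would check that these local identifications glue. Given a face inclusion $\tau \preceq \sigma$, the open immersion $U_\tau \hookrightarrow U_\sigma$ induces an open immersion $[U_\tau/T] \hookrightarrow [U_\sigma/T]$, and the dual surjection of monoids $S_\sigma \twoheadrightarrow S_\tau$ corresponds exactly to the inclusion $\overline \tau \hookrightarrow \overline \sigma$ as the extended face where certain coordinates take the value $\infty$. This compatibility shows the local identifications assemble into a continuous map $\mu_\Delta: |\mathscr A(\Delta)_\circ^{\an}| \to \overline \Delta$, which is a bijection on points and a homeomorphism, hence yields the claimed identification of extended cone complexes.

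Finally I would verify that the displayed diagram commutes. For a point of $X^{\an}$ represented by a map $\spec K \to X$ extended to $\spec R \to X$, composing with the stack quotient $X \to \mathscr A_X$ gives a morphism $\spec R \to [U_\sigma/T]$ whose underlying monoid homomorphism $S_\sigma \to R$ is, up to $T(R)$-action, the pullback of monomials. Passing to valuations recovers precisely the recipe defining $\trop: X^{\an} \to \overline \Delta$ in the sense of Kajiwara and Payne, so the triangle commutes on points, and by continuity on the whole space. The main obstacle I anticipate is not the pointwise bookkeeping but the careful handling of the formal/analytic generic fiber of a non-separated Artin stack and showing that the bijection on points promotes to an honest homeomorphism of topological spaces with the extended cone structure; this requires appealing to Raynaud's generic fiber for formal stacks as developed by Yu and Ulirsch, together with the fact that charts of the form $[U_\sigma/T]$ form a smooth cover of $\mathscr A_X$ whose induced cover of the generic fiber is compatible with the polyhedral stratification of $\overline \Delta$.
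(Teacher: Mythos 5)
This statement is not proved in the paper: it is quoted verbatim as an external input, namely Ulirsch's~\cite[Theorem 1.1]{U14b}, and the paper only uses its conclusion (via Corollary~\ref{cor: trop-via-artin}). So there is no internal proof to compare against; the relevant comparison is with the argument in the cited reference, and your sketch is essentially that argument: describe $R$-points of the chart $[U_\sigma/T]$ as $T$-torsors with an equivariant map to $U_\sigma$, use triviality of torsors over a valuation ring to reduce to monoid homomorphisms $S_\sigma\to (R,\cdot)$ modulo $T(R)=\Hom(M,R^\times)$, pass to $R/R^\times\cong \Gamma_{\geq 0}\sqcup\{\infty\}$ to land in $\overline\sigma=\Hom(S_\sigma,\RR_{\geq 0}\sqcup\{\infty\})$, glue over the fan, and check against the Kajiwara--Payne recipe. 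That is the correct and standard route.

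Two points to tighten. First, in the gluing step you have the face maps backwards: for a face $\tau\preceq\sigma$ the open immersion $U_\tau\hookrightarrow U_\sigma$ is dual to the \emph{injective} localization $S_\sigma\hookrightarrow S_\tau=S_\sigma+(-F)$ at the face $F$ dual to $\tau$, not to a surjection $S_\sigma\twoheadrightarrow S_\tau$; correspondingly $\overline\tau$ sits inside $\overline\sigma$ as the locus where the inverted coordinates take the value $0$ (the unique unit of the additive monoid $\RR_{\geq 0}\sqcup\{\infty\}$), not the value $\infty$ --- the extended faces at infinity of $\overline\sigma$ are a different stratum and index the toric boundary, not the smaller charts. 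Second, the passage from $\{S_\sigma\to R\}/T(R)$ to $\Hom(S_\sigma, R/R^\times)$ needs the small lifting argument you elide: a homomorphism $\bar u:S_\sigma\to\Gamma_{\geq 0}\sqcup\{\infty\}$ sends a prime ideal of $S_\sigma$ to $\infty$ and a face $F$ to $\Gamma_{\geq 0}$, and one lifts $F^{\mathrm{gp}}\to\Gamma$ through $K^\times\to\Gamma$ using freeness of $F^{\mathrm{gp}}\subset M$, then extends by $0$; transitivity of $T(R)$ on lifts uses that $M/F^{\mathrm{gp}}$ is free. Neither issue affects the viability of the approach.
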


We obtain the following as an immediate corollary. 

\begin{corollary}\label{cor: trop-via-artin}
Let $f: \mathscr C\to X$ be a logarithmic stable map with smooth generic fiber defined over $\spec(R)$, where $R$ is a valuation ring containing $\CC$. Let $\plC$ be the skeleton of $\mathscr C$. Then, the tropicalization of $f$ coincides with the composite map
\[
\plC \to \mathscr C^{\an}_\eta \to X^{\an}\to |{\mathscr A_X}^{\an}_\circ|
\]
\end{corollary}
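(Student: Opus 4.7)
The plan is to reduce the corollary to a direct diagram chase using Ulirsch's theorem (the result immediately preceding the corollary in the excerpt), together with the construction of the pointwise tropicalization recalled in Section~\ref{sec: pointwise-trop}.

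First I would unpack the definition of the tropicalization of $f$ from Section~\ref{sec: pointwise-trop}. Given the model $\spec(R)\to \mathscr L_\Gamma(X)$, one pulls back the universal family to obtain $f:\mathscr C\to X$ over $\spec(R)$, together with the canonical section $\plC\to \mathscr C_\eta^{\an}$ of the Berkovich--Payne--Rabinoff retraction. The tropicalization $f^{\trop}:\plC\to\overline\Delta$ is by construction the composition
\[
\plC \longrightarrow \mathscr C_\eta^{\an} \xrightarrow{\ f^{\an}\ } X^{\an} \xrightarrow{\ \trop\ } \overline\Delta,
\]
where the last arrow is the Payne--Thuillier deformation retraction onto the extended skeleton.

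Next I would invoke Ulirsch's theorem, which furnishes a canonical identification of extended cone complexes $\mu_\Delta:|\mathscr A_X^{\an}_\circ|\to \overline\Delta$ making the Payne--Thuillier retraction equal to the composition of the stack quotient $X^{\an}\to |\mathscr A_X^{\an}_\circ|$ with $\mu_\Delta$. Substituting this factorization into the display above, the tropicalization becomes
\[
\plC \longrightarrow \mathscr C_\eta^{\an} \longrightarrow X^{\an} \longrightarrow |\mathscr A_X^{\an}_\circ| \xrightarrow{\ \mu_\Delta\ } \overline\Delta,
\]
so the map $f^{\trop}$ factors through $|\mathscr A_X^{\an}_\circ|$ via precisely the asserted composite, and under the identification $\mu_\Delta$ the two agree.

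There is essentially no obstacle here beyond bookkeeping: the content of the corollary is entirely contained in the preceding theorem, and the only step requiring care is to verify that Ulirsch's identification is compatible with the generic fiber formation for the non-proper stack $\mathscr A_X$ (so that one genuinely works with $\mathscr A_X^{\an}_\circ$ rather than a Berkovich analytification), and that the section $\plC\to\mathscr C_\eta^{\an}$ used here is the same one employed in the construction of $f^{\trop}$. Both are built into the setup of Section~\ref{sec: pointwise-trop}, so no new input is required.
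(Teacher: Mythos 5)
Your proposal is correct and matches the paper's intent exactly: the paper offers no separate argument, presenting the corollary as an immediate consequence of Ulirsch's commutative diagram, which is precisely the factorization-and-substitution you carry out. The only content beyond bookkeeping is the identification $\mu_\Delta$ of $\left|{\mathscr A_X}^{\an}_\circ\right|$ with $\overline\Delta$, which you correctly note is supplied by the quoted theorem.
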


\subsection{Maps to $\mathscr A_X$} The purpose of this subsection is to establish a ``global'' version of Corollary~\ref{cor: trop-via-artin} above. In~\cite{AW}, Abramovich and Wise introduce a stack of prestable logarithmic morphisms to the Artin fan $\mathscr A_X$ itself. Fixing the discrete data as before, we will use the following result of theirs. 

\begin{theorem}
The stack $\mathscr L^{\mathrm{pre}}_\Gamma(\mathscr A_X)$ is a logarithmically smooth algebraic stack, locally of finite type, and dimension $3g-3+n$. 
\end{theorem}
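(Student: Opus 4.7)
The strategy is to analyze the forgetful morphism
\[
F: \mathscr L^{\mathrm{pre}}_\Gamma(\mathscr A_X) \longrightarrow \mathfrak{M}_{g,n}
\]
sending a prestable log map $[C \to \mathscr A_X]$ to its underlying pointed prestable curve. Since $\mathfrak{M}_{g,n}$ is itself a log smooth algebraic stack of dimension $3g-3+n$ with its boundary (nodal) log structure, the theorem will follow at once if we can show that $F$ is log étale and representable by algebraic stacks locally of finite type.

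The key input for log étaleness is that $\mathscr A_X$ is log étale over $\mathrm{Spec}(\CC)$: the stack $[X/T]$ has trivial coarse moduli and its log structure is the pullback of the toric log structure on $X$, which is log smooth of relative dimension zero over the point. Consequently, a logarithmic morphism $C \to \mathscr A_X$ from a log curve is determined, up to finite discrete data, by a morphism of characteristic sheaves $\overline{M}_{\mathscr A_X}|_{\text{image}} \to \overline{M}_C$, i.e.\ by combinatorial data on the dual graph of $C$ (a choice of cone for each generic point and compatible edge slopes). The deformation theory of such a morphism is therefore unobstructed \emph{relative} to deformations of $C$, which is exactly the statement that the relative log cotangent complex $\bL^{\log}_{F}$ vanishes; this is precisely log étaleness of $F$. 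Combining with the known log smoothness of $\mathfrak{M}_{g,n}$ gives log smoothness and the dimension count $3g-3+n$.

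For algebraicity and local finite type, the plan is to produce an étale cover by concrete finite-type pieces indexed by combinatorial types. Étale locally on $\mathfrak{M}_{g,n}$ one can trivialize the dual graph of the universal curve; over such a chart, fixing a combinatorial type $\Theta$ in the sense of Section~\ref{sec: TSM} (a cone $\sigma_v \in \Delta$ for each vertex, and slope/contact data along edges), the locus of log maps of that type is cut out by the balancing and node-compatibility conditions. By the Gross--Siebert characterization (Theorem~\ref{thm: rem1.4}), the minimal base monoid of such a family is identified with the integral points of the tropical moduli cone $\sigma_\Theta$; in particular, the corresponding stratum is representable by a toric stack with characteristic monoid $S_{\sigma_\Theta}$, hence algebraic and of finite type. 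Gluing over all prestable $\Theta$ with fixed recession data yields an algebraic stack locally of finite type (finiteness fails only globally because the 2-valent destabilizations produce infinitely many combinatorial types).

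The main obstacle, and the step that genuinely requires care rather than routine bookkeeping, is descending from the logarithmic moduli problem to an \emph{algebraic} stack over schemes. One must invoke Olsson's representability theorem for $\mathrm{Hom}^{\log}$ to a log algebraic stack, applied to the target $\mathscr A_X$, and then verify that the minimality condition cuts out an open substack étale-locally, so that the passage from ``all log maps'' to ``minimal log maps'' preserves the log étale property. This is the technical heart of \cite{AW}, and any proof attempt must either borrow their Artin-fan formalism wholesale or reprove Olsson-style representability in this equivariant setting; the remaining assertions (dimension, log smoothness, local finite type) then follow formally from the analysis of $F$ above.
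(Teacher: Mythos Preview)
The paper does not prove this statement at all: it is quoted verbatim as a result of Abramovich and Wise, with the attribution ``we will use the following result of theirs'' and a citation to~\cite{AW}. There is no argument in the paper to compare your proposal against.

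That said, your sketch is essentially the strategy carried out in~\cite{AW}: one shows the forgetful map to $\mathfrak{M}_{g,n}$ is logarithmically \'etale by exploiting that $\mathscr A_X$ is log \'etale over the base (so the relative log cotangent complex of a map $C\to \mathscr A_X$ vanishes), and algebraicity is obtained from Olsson-type representability for logarithmic Hom-stacks together with the openness of minimality. Your final paragraph correctly identifies where the real work lies. One small caution: your description of producing an \'etale cover ``indexed by combinatorial types'' and identifying each stratum with a toric stack is closer to a stratification than to an atlas, and does not by itself establish algebraicity; in~\cite{AW} algebraicity is obtained by the representability argument you mention, not by gluing type-by-type pieces. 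If you intend this as more than an informal heuristic you should make that distinction explicit.
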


The reader will note that the logarithmic smoothness of this stack is in sharp contrast to the geometry of $\mathscr L_\Gamma(X)$, which satisfies Murphy's law\footnote{Taking $X = \PP^n$ with its toric boundary and $\Gamma$ to have transverse contact orders, Murphy's Law for $\mathscr L_\Gamma(X)$ follows immediately from~\cite[Theorem M1a]{Vak06}.} in the sense of Vakil~\cite{Vak06}. Indeed, this remarkable smoothness property was applied in~\cite{R15a} to show that every tropical stable map arose as the tropicalization of a family of stable maps to the Artin fan.

In~\cite{Thu07}, Thuillier defines an analytic generic fiber functor $(\cdot)^\beth$ from the category of schemes, locally of finite type, over trivially valued fields to analytic spaces over trivially valued fields. It should be regarded as the trivially valued version of Raynaud's generic fiber in the nontrivially valued case. Given a scheme $X$, points of $X^\beth$ are, by definition, equivalence classes of maps
\[
\spec(R)\to X,
\]
where $R$ is the valuation ring of a valued field extension of the trivially valued base field $k$. While the usual Berkovich analytification $X^{\an}$ reflects the properness and separatedness of $X$, the analytic generic fiber is \textit{always} compact and Hausdorff.  This is an immediate consequence of the valuative criteria for properness and separatedness. In~\cite[Section 5.2]{U16}, this association $X\mapsto X^\beth$ is extended to algebraic stacks that are locally of finite type. We will apply this analytification functor to the stack $\mathscr L_\Gamma^{\mathrm{pre}}(X)$. 

Any point of $\mathscr L_\Gamma^{\mathrm{pre}}(X)^\beth$ gives rise to a family of logarithmic prestable maps
\[
\begin{tikzcd}
(\mathscr C,s_1,\ldots,s_n) \arrow{r} \arrow{d} & \mathscr A_X \\
\spec(R), & 
\end{tikzcd}
\]
over a valuation ring extending $\CC$. Following the same procedure as in the previous section, passing to skeletons yields a map
\[
\trop: \mathscr L_\Gamma^{\mathrm{pre}}(\mathscr A_X)^\beth\to T^{\mathrm{pre}}_\Gamma(\Delta).
\]
On the other hand, the stack $\mathscr L_\Gamma^{\mathrm{pre}}(X)$ is a toroidal (i.e. logarithmically smooth) stack in the lisse-\'etale topology. There exists a continuous deformation retraction of the analytic space onto an extended cone complex: 
\[
\bm{p}_{\mathscr L}: \mathscr L_\Gamma^{\mathrm{pre}}(\mathscr A_X)^\beth\to \overline\Sigma(\mathscr L_\Gamma^{\mathrm{pre}}(\mathscr A_X)).
\]

Note that this deformation retraction has been constructed in various closely related settings in the literature, see~\cite{ACP,Thu07}. We briefly describe the necessary changes in the setting of Artin stacks. Let $\cal X$ be a toroidal Artin stack. Let $U\to \cal X$ be a smooth chart by a toroidal scheme without self-intersection. Consider the self product 
\[
R = U\times_{\cal X} U\rightrightarrows U,
\]
and note that, since $\cal X$ is toroidal, the arrows above are smooth morphisms. We wish to use the toric charts of $U$ to obtain charts on $R$. This is done as follows. Given a smooth morphism $R\to U$, after shrinking, the \'etale local structure theorem for smooth morphisms~\cite[Tag 039P]{stacks-project} yields commutative diagram
\[
\begin{tikzcd}
R \arrow{d} & V_R \arrow{l}\arrow{r} \arrow{d} & U_\sigma \times \mathbb G_m^r \arrow{d} \\
U & V_U \arrow{l} \arrow{r} & U_\sigma.
\end{tikzcd}
\]
In the diagram above, $V_R$ and $V_U$ are opens, and the rightward horizontal arrows are \'etale. Thus, we obtain toroidal charts for $R$. The crucial point is that the torus factors in the charts for $R$ do not change the skeleton, i.e. the skeletons of $U_\sigma^\beth$ and $(U_\sigma\times\mathbb G_m^r)^\beth$ are canonically identified. The extended skeleton $\overline \Sigma(\cal X)$ of $\mathcal X^\beth$ can now be constructed via a colimit of the skeleton for the diagram $(R\rightrightarrows U)^\beth$. This is carried out using only cosmetic modifications to the arguments already present in the literature, see~\cite{ACP,Thu07,U13, U16}. The remaining details are left to an interested reader.

The main result of this section is the relationship between the generalized extended cone complexes $T^{\mathrm{pre}}_\Gamma(\Delta)$ and $\overline \Sigma(\mathscr L_\Gamma^{\mathrm{pre}}(\mathscr A_X))$.

\begin{theorem}\label{trop-for-the-stack}
There is a commutative diagram of continuous morphisms
\[
\begin{tikzcd}
\mathscr L_\Gamma^{\mathrm{pre}}(\mathscr A_X)^\beth\arrow{rr}{\trop}\arrow[swap]{dr}{\bm p_{\mathscr L}} & & T^{\mathrm{pre}}_\Gamma(\Delta)\\
&\overline\Sigma(\mathscr L_\Gamma^{\mathrm{pre}}(\mathscr A_X))  \arrow[swap]{ur}{\trop_\Sigma}, &
\end{tikzcd}
\]
where 
\[
\trop_\Sigma:\overline \Sigma(\mathscr L_\Gamma^{\mathrm{pre}}(X))\to T^{\mathrm{pre}}_\Gamma(X)
\]
is a finite morphism of generalized extended cone complexes and becomes an isomorphism upon restriction to any cell of the source.
\end{theorem}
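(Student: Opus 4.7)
The plan is to construct $\trop_\Sigma$ cell-by-cell using the Gross--Siebert characterization of minimality (Theorem~\ref{thm: rem1.4}), verify commutativity of the triangle via the pointwise description of tropicalization (Corollary~\ref{cor: trop-via-artin}), and then deduce finiteness from a direct comparison of the two colimit constructions.

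First I would show that $\trop$ factors continuously through $\bm p_{\mathscr L}$. Any point of $\mathscr L^{\mathrm{pre}}_\Gamma(\mathscr A_X)^\beth$ is represented by a family $\mathscr C \to \mathscr A_X$ over $\spec(R)$, and by Corollary~\ref{cor: trop-via-artin} its tropicalization is the composite $\plC \hookrightarrow \mathscr C^{\mathrm{an}}_\eta \to |\mathscr A_X^{\mathrm{an}}_\circ|$. This composite is determined by the combinatorial type of the central fiber together with the induced monoid homomorphism out of the characteristic monoid on the base, which is precisely the data recorded by $\bm p_{\mathscr L}(x)$. Continuity of the factorization follows from the standard continuity of $(\cdot)^\beth$-tropicalization on toric charts and the étale-local toroidal charts used to build $\overline\Sigma$.

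Next I would define $\trop_\Sigma$ on a single cell. Each open cell of $\overline\Sigma(\mathscr L^{\mathrm{pre}}_\Gamma(\mathscr A_X))$ corresponds to a locally closed toroidal stratum parametrizing minimal logarithmic prestable maps of a fixed combinatorial type $\Theta$, and the open cone is $\Hom(P_\Theta, \RR_{\geq 0})$ for the characteristic monoid $P_\Theta$ of that stratum. Theorem~\ref{thm: rem1.4}, applied to $\mathscr A_X$, canonically identifies $\Hom(P_\Theta, \NN)$ with the integral tropical moduli monoid $T_\Theta(\NN)$; this yields an isomorphism of open cones onto $\sigma_\Theta$, extended to the canonical compactifications $\overline\sigma_\Theta$ by applying $\Hom(-, \RR_{\geq 0} \sqcup \{\infty\})$. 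Commutativity of the triangle on this cell is then a direct translation between two descriptions of the same tropical datum.

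Finally I would globalize and establish finiteness. Both $\overline\Sigma(\mathscr L^{\mathrm{pre}}_\Gamma(\mathscr A_X))$ and $T^{\mathrm{pre}}_\Gamma(\Delta)$ are assembled as colimits over face inclusions coming from edge contractions and over identifications by automorphism groups of combinatorial types; on the source side these isotropy groups come from the toroidal charts on $\mathscr L^{\mathrm{pre}}_\Gamma(\mathscr A_X)$, while on the target side they are the automorphisms of Section~\ref{sec: TSM}. As observed in the construction of $\overline\Sigma$ for Artin stacks, the torus factors appearing in the smooth charts for $R = U \times_{\cal X} U$ do not alter the skeleton; consequently each face relation and each automorphism on the source maps to a face relation or automorphism on the target, and the locally defined isomorphisms glue into a continuous morphism of generalized extended cone complexes. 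For finiteness, the preimage of any cell $\overline\sigma_\Theta \subset T^{\mathrm{pre}}_\Gamma(\Delta)$ consists of those cells of $\overline\Sigma$ whose associated combinatorial type equals $\Theta$, and this is a finite set because combinatorial type is a discrete invariant of the strata of the moduli stack. The main obstacle is the bookkeeping in this last step: one must check that the isotropy of a toroidal stratum on the source coincides with the subgroup of $\Aut(\Theta)$ that acts on $\overline \sigma_\Theta$, so that no extra identifications collapse cells on the source before they map to the target.
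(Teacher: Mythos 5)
Your overall strategy matches the paper's: identify each cell of the skeleton with a tropical moduli cone via the Gross--Siebert minimality criterion (Theorem~\ref{thm: rem1.4}), check commutativity by comparing the toroidal retraction with the pointwise tropicalization in a local toric chart, and then assemble. The cell-by-cell isomorphism and the commutativity of the triangle are handled essentially as in the paper.

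However, your finiteness argument has a genuine gap. You assert that the preimage of a cell $\overline\sigma_\Theta$ is finite ``because combinatorial type is a discrete invariant of the strata of the moduli stack.'' Discreteness of an invariant does not bound the cardinality of its fibers, and here there is no quasi-compactness to fall back on: $\mathscr L^{\mathrm{pre}}_\Gamma(\mathscr A_X)$ is only \emph{locally} of finite type, so a priori infinitely many strata could share the combinatorial type $\Theta$. The actual source of multiplicity is that a single underlying prestable map $[\underline C\to \underline{\mathscr A_X}]$ can admit several non-isomorphic minimal logarithmic enhancements, each contributing its own stratum of type $\Theta$, and one must prove this number is finite. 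The paper does this by passing to the associated fine-but-not-saturated maps $f^{\mathrm{us}}$: two enhancements of the same underlying map have unsaturated versions that agree up to saturation, and saturation is, in local charts, the normalization of a (possibly non-normal) toric model, hence a finite morphism. This is the step your proposal is missing; without it the claim that $\trop_\Sigma$ is a finite morphism is unjustified. (Your closing worry about matching isotropy groups is legitimate bookkeeping, but it is not where the real difficulty lies.)
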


\begin{proof}
Consider a point $p\in \mathscr L_\Gamma(\mathscr A_X)$ corresponding to a minimal logarithmic map $[C\to \mathscr A_X]$. Suppose $p$ is contained in $W$, the locally closed stratum of the logarithmically smooth stack $\mathscr L_\Gamma(\mathscr A_X)$ parametrizing maps of combinatorial type $\Theta$. As above, there is a smooth open neighborhood $U\to \mathscr L_\Gamma(\mathscr A_X)$ containing $p$ together with an \'etale map
\[
U\to U_\sigma,
\]
where $U_\sigma = \spec(\CC[S_\sigma])$ is an affine toric variety. It follows from logarithmic smoothness of the moduli stack and the definition of the minimal log structure in~\cite[Section 1.4]{GS13} that the monoid $S_\sigma$ defining $U_\sigma$ is coincides with stalk of the minimal characteristic of $[C\to \mathscr A_X]$. Moreover, by Theorem~\ref{thm: rem1.4} there is a natural identification of the cones $\sigma_\Theta$ and $\sigma$. Thus, the skeleton of $U^\beth$ is naturally identified with the extended cone $\overline \sigma$. The local coordinate description of the retraction map~\cite[Section 6]{U13} shows that the retraction
\[
U^\beth\to \overline \sigma_\Theta
\] 
coincides with the pointwise tropicalization map construction in Section~\ref{sec: pointwise-trop}. Given a point of $U^\beth$, we obtain a family of logarithmic maps over a valuation ring $\spec(R)$. We may write $U^\beth$ as a disjoint union of sets, indexed by the locally closed $W$ stratum in $U$ to which such a family specializes. Let $\overline \sigma_\Theta^\circ$ be the closure of the relative interior of $\sigma_\Theta$ in its compactification. The skeleton $\overline \Sigma(\mathscr L(\mathscr A_X))$ decomposes as
\[
\overline \Sigma(\mathscr L^{\mathrm{pre}}_\Gamma(\mathscr A_X)) = \bigsqcup_W \overline \sigma_{\Theta_W}^\circ/\mathrm{Aut}(\Theta_W),
\]
where $\Theta_W$ is the combinatorial type associated to the generic point of the stratum $W$. Similarly, the moduli space of prestable tropical maps decomposes as 
\[
T_\Gamma^{\mathrm{pre}}(\Delta) = \bigsqcup_\Theta \overline \sigma_\Theta^\circ/\mathrm{Aut}(\Theta).
\]
The skeleton $\overline \Sigma(\mathscr L(\mathscr A_X))$ includes naturally into $\mathscr L^{\mathrm{pre}}_\Gamma(\mathscr A_X)^\beth$, so by composing with the pointwise tropicalization map $\trop$, we obtain
\[
\trop_\Sigma: \overline \Sigma(\mathscr L_\Gamma^{\mathrm{pre}}(\mathscr A_X))\to T_\Gamma^{\mathrm{pre}}(\Delta).
\]
By the above discussion, $\trop_\Sigma$ is an isomorphism upon restriction to any fixed cell of $\overline \Sigma(\mathscr L_\Gamma(\mathscr A_X)))$.
It remains to analyze the strata of these two extended cone complexes. For this, we recall that given any logarithmic stable map $[f:C\to \mathscr A_X]$, there is a map in the category of fine but not necessarily saturated logarithmic stacks $f^{\mathrm{us}}:C\to \mathscr A_X$ by the construction in~\cite[Section 3.6]{R15b}. In particular, fix an the underlying map $[\underline C\to \underline {\mathscr A_X}]$. Given two logarithmic enhancements $f_1:C\to \mathscr A_X$ and $f_2:C\to \mathscr A_X$, the maps $f^{\mathrm{us}}_1$ and $f^{\mathrm{us}}_2$ coincide up to saturation. In local charts, saturation is just the normalization of a local (non-normal) toric model. Since the normalization is finite, the saturation is finite. it follows that for each combinatorial type $\Theta$, there are finitely many strata $ W$ having type $\Theta$. If $\sigma_\Theta$ is a cone of $T_\Gamma^{\mathrm{pre}}(\Delta)$, the set of cones in $\trop_\Sigma^{-1}(\sigma_\Theta)$ is identified with the set of minimal logarithmic enhancements of the same underlying map, namely the map parametrized by the generic point of the stratum of type $\Theta$. In particular, this preimage is a finite union of cones, each mapping isomorphically onto $\sigma_\Theta$. The result follows. 
\end{proof}

We derive the first part of Theorem~\ref{thm: moduli} as a corollary.

\begin{corollary}
There is a continuous tropicalization map 
\[
\trop: \mathscr L_\Gamma^\circ(X)^{\an}\to T_\Gamma(\Delta),
\]
compatible with evaluation maps and forgetful maps to the moduli space of curves.
\end{corollary}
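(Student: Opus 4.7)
The plan is to realize $\trop$ as the composition of the analytic morphism induced by the stack quotient $X\to \mathscr A_X$ with the tropicalization map of Theorem~\ref{trop-for-the-stack}. Concretely, I would form
\[
\mathscr L^\circ_\Gamma(X) \hookrightarrow \mathscr L_\Gamma(X) \longrightarrow \mathscr L^{\mathrm{pre}}_\Gamma(\mathscr A_X),
\]
where the second arrow post-composes a logarithmic stable map $f:C\to X$ with $X\to \mathscr A_X$. Analytifying the left-hand side via $(\cdot)^{\an}$ and the right-hand side via Thuillier's $(\cdot)^\beth$, and then composing with $\trop_\Sigma\circ \bm{p}_{\mathscr L}$ from Theorem~\ref{trop-for-the-stack}, yields a continuous map $\mathscr L^\circ_\Gamma(X)^{\an}\to T^{\mathrm{pre}}_\Gamma(\Delta)$.

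Next I would check that this map factors through the stable moduli space $T_\Gamma(\Delta)$. A point of $\mathscr L^\circ_\Gamma(X)^{\an}$ extends, by the valuative criterion applied to the proper stack $\mathscr L_\Gamma(X)$, to a family of logarithmic stable maps $\mathscr C\to X$ over a valuation ring with smooth generic fiber. By Corollary~\ref{cor: trop-via-artin}, the candidate map agrees on this point with the pointwise tropicalization of Section~\ref{sec: pointwise-trop}, which outputs a stable tropical map from a smooth tropical curve; hence the image lies in $T^\circ_\Gamma(\Delta)\subseteq T_\Gamma(\Delta)$. Continuity of the full composition follows from continuity of each factor: functoriality of $(\cdot)^\beth$, Thuillier's construction of the deformation retract $\bm{p}_{\mathscr L}$, and the fact that $\trop_\Sigma$ is a morphism of extended cone complexes.

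The compatibilities are then essentially bookkeeping. For the $i$-th evaluation morphism $\mathrm{ev}_i:\mathscr L^\circ_\Gamma(X)\to X$, the analytification followed by $X^{\an}\to \overline\Delta$ of~\cite{Pay09,Thu07} reads off the image of the section $s_i$ under $f$; by the pointwise description of Section~\ref{sec: pointwise-trop} this is precisely the image of the $i$-th leaf of $\plC$ under $\plC\to \overline\Delta$, i.e.\ the tropical evaluation. Compatibility with the forgetful map to the moduli of curves holds because the source of $\trop(f)$ is the skeleton of $\mathscr C_\eta^{\an}$, matching the tropicalization of the forgetful morphism as constructed in~\cite{ACP}. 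The only genuine subtlety, namely that the cone complex structure on the skeleton of $\mathscr L^{\mathrm{pre}}_\Gamma(\mathscr A_X)^\beth$ is correctly identified with $T^{\mathrm{pre}}_\Gamma(\Delta)$, has already been resolved by Theorem~\ref{trop-for-the-stack}; what remains for this corollary is to observe that composing with the analytic morphism from $\mathscr L^\circ_\Gamma(X)^{\an}$ lands in the stable and smooth-source locus, and to track the two compatibilities.
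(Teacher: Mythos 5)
Your proposal is correct and follows essentially the same route as the paper: the paper's proof is exactly the composite $\mathscr L_\Gamma^\circ(X)^{\an} \to \mathscr L_\Gamma(X)^\beth \to \mathscr L_\Gamma^{\mathrm{pre}}(\mathscr A_X)^\beth \to T^{\mathrm{pre}}_\Gamma(\Delta)$, using properness of $\mathscr L_\Gamma(X)$ to identify $(\cdot)^\beth$ with $(\cdot)^{\an}$ and Theorem~\ref{trop-for-the-stack} for the last arrow. The only cosmetic difference is that the paper dispatches the evaluation and forgetful compatibilities by citing that these morphisms are logarithmic together with general functoriality of tropicalization, where you verify them by hand pointwise.
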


\begin{proof}
Consider the sequence of maps
\[
\trop: \mathscr L_\Gamma^\circ(X)^{\an} \to  \mathscr L_\Gamma(X)^\beth \to \mathscr L_\Gamma^{\mathrm{pre}}(\mathscr A_X)^\beth \to T^{\mathrm{pre}}_\Gamma(\Delta).
\]
Since $\mathscr L_\Gamma(X)$ is proper, its formal fiber coincides with its analytification, and first map is simply the analytification of the inclusion of an open subscheme. The second arrow is constructed by applying the functor $(\cdot)^\beth$ for stacks to the natural map from $\mathscr L_\Gamma(X)$ to $\mathscr L_\Gamma(\mathscr A_X)$, see~\cite[Section 5]{U16}. The final map is the pointwise tropicalization constructed in the preceding theorem. The composition is clearly continuous, since each arrow is continuous. Forgetful morphisms to $\Mbar_{g,n}$ and evaluation maps to $X$ are all logarithmic, so functoriality follows from Theorem~\ref{trop-for-the-stack} above and general results on functoriality for tropicalization~\cite[Theorem 1.1]{U13}.
\end{proof}

As a consequence of the theorem, we rephrase the tropical lifting question as follows.

\begin{center}
 \textit{When does a tropical stable map $[\plC\to \Delta]$ lie in the image of the continuous map $\trop$ above?}
\end{center}

The next section takes advantage of the continuity of this map to establish the main applications.

\section{Proofs of lifting theorems}

\subsection{Polyhedrality} Let $p =  [C\to X]$ be a minimal logarithmic stable map and consider the associated map $[C\to \mathscr A_X]$. Let $U$ be a toric neighborhood of $[C\to \mathscr A_X]$ in $\mathscr L_\Gamma^{\mathrm{pre}}(\mathscr A_X)$. Let $Z$ be the local model in the smooth topology of the moduli space $\mathscr L_\Gamma(X)$ near $p$. After possibly shrinking $Z$, the points of the compact analytic space $Z^\beth$ correspond to families over valuation rings of logarithmic stable maps whose special fiber, after composition with $X\to \mathscr A_X$, lies in $U$. 

Let $Q$ be the stalk of the minimal characteristic monoid at $p$. As discussed in the previous section, the toric neighborhood $U$ above admits an \'etale map 
\[
U\to \spec(\CC[\![Q]\!])\times \mathbb G_m^r.
\]
The skeleton of $U^\beth$ is naturally identified with $\Hom(Q,\RR_{\geq 0})$. By the results of the previous section, the locus of realizable tropical curves having the combinatorial type dual to $Q$ is the image of $Z^\beth$ under the natural morphism
\[
Z^\beth \to U^\beth \to \Hom(Q,\RR_{\geq 0}).
\]
The latter map is the retraction map of $U^\beth$ onto its
skeleton. Thus, on the relative interior of each cone, the image of $Z^\beth$ identified with the tropicalization of a subvariety of a toric variety. The fundamental theorem of tropical geometry implies that the image of $Z^\beth$ in the cone above is polyhedral~\cite[Theorem 1.1]{U15}, see also~\cite{BG84,MS14,PP12}. 

Since the realizable locus is polyhedral on the relative interior of each cone, it remains to show that the image of $\mathscr L_\Gamma(X)^\beth$ in $T_\Gamma(\Delta)$ under tropicalization is topologically closed. The analytification of the
moduli space of all minimal logarithmic stable maps $\mathscr
L_\Gamma(X)^{\mathrm{an}}$ is compact, since $\mathscr
L_\Gamma(X)$ is proper. By continuity of the map $\trop$ constructed
in the previous section, its image in
$T_\Gamma^{\mathrm{pre}}(\Delta)$ is compact. Since the tropical
moduli space is Hausdorff, this image is also closed. A tropical curve is realizable if it is the tropicalization of a family of logarithmic maps over a valuation ring whose generic fiber carries the trivial logarithmic structure. In other words, to complete the proof we must identify the image of the locus of maps carrying trivial logarithmic structure, i.e. $\mathscr L_\Gamma^\circ(X)^{\mathrm{an}}$, inside
$T_\Gamma^{\mathrm{pre}}(\Delta)$. Let
$T_\Gamma^{\mathrm{pre},\circ}(\Delta)$ denote the complement of the
extended faces of the generalized extended cone complex
$T_\Gamma^{\mathrm{pre}}(\Delta)$. We claim an equality
of sets
\[
\trop(\mathscr{L}_\Gamma^{\circ}(X)^{\mathrm{an}}) =
\trop(\mathscr{L}_\Gamma(X)^{\mathrm{an}})\cap T_\Gamma^{\mathrm{pre},\circ}(\Delta)
\]
To see this, choose any family of stable maps $[\mathscr C\to
 X]$ over a valuation ring $\spec(R)$ that tropicalizes to a
point of $T_\Gamma^{\mathrm{pre},\circ}(\Delta)$. By definition of the
latter, this means that $\mathscr C$ is a family of nodal curves with
a skeleton such that all edge lengths are finite and thus, the generic
fiber $\mathscr C_\eta$ is smooth. It follows that logarithmic
structure on the base of the generic fiber
of the family $[\mathscr C_\eta\to
 X]$ is trivial, and we obtain the set theoretic equality above. The
 realizable locus is thus closed in
 $T_\Gamma^{\mathrm{pre},\circ}(\Delta)$, which concludes the proof of
Theorem~\ref{thm: moduli}. 
\qed

\begin{remark}
The essential content of the above theorem is that the realizability
conditions are given by the bend loci of the equations that describe
the moduli space of maps locally. By Vakil's Murphy's law, one should
expect these equations to become arbitrarily complicated, and thus one
should also expect arbitrarily high complexity on the piecewise linear side. 
\end{remark}

\subsection{Realizability of limits: Theorem~\ref{thm: limiting-realizability}} Let $[\plC_t\to \Delta]$ for ${t\in [0,1)}$  be a continuously varying family of tropical stable maps. We have an induced moduli map 
\[
[0,1)\to T_\Gamma(\Delta),
\] 
whose image lies in the locus of realizable tropical curves. Since the realizable locus is a closed set, it follows that the limiting map $[\plC_1\to \Delta]$ also lies in this set, and the result follows immediately. 

\qed

\subsection{Well-spacedness for good reduction} We first recall the definition of Speyer's well-spacedness condition. Note that any genus $1$ abstract tropical curve $\plC$ has a unique cycle, possibly a single vertex of genus $1$ or a self-loop at a vertex. A a genus $1$ tropical stable map $[\plC\to \Delta]$ is said to be \textit{superabundant} if the image of the unique cycle $L$ in $|\Delta|$ is contained in a proper affine subspace.  

\begin{definition}\label{def: well-spaced}
Let $[f: \plC\to \Delta]$ be a superabundant genus $1$ tropical stable map. Let $H$ be a hyperplane containing the loop $L$ and consider the subgraph $\plC_H$, the connected component of $f^{-1}(H\cap \plC)$ containing $L$. Denote the $1$-valent vertices of $\plC_H$ by $v_1,\ldots,v_k$ and by $\ell_i$ the distance from $v_i$ to $L$. The map $f$ is \textbf{well-spaced with respect to $H$} if the the minimum of the multiset of distances $\{\ell_1,\ldots, \ell_k\}$ occurs at least twice.
 
\noindent
The map $f$ is said to be \textbf{well-spaced} if it is well-spaced with respect to every hyperplane containing $L$. 
\end{definition}

\subsection{Proof of Theorem~\ref{thm: good-reduction}} Let $[f: \plC\to \Delta]$ be a tropical stable map of genus $1$ such that there is a unique point $p\in \plC$ satisfying $g(p) = 1$. In other words, the underlying graph of $\plC$ is a tree. Let $\hat \plC_t\to \Delta$ be the tropical stable map obtained by replacing replacing the genus function with one such that $g(p) = 0$, and attaching a self-loop at the vertex $p$, where $t\in \RR_+$ is the length of this self-loop. By hypothesis, the star of $p$ in the modified map $[\plC_t\to \Delta]$ is realizable. By applying Speyer's genus $1$ realizability theorem~\cite[Theorem 3.4]{Sp07}, we see that for each value of $t>0$, $\hat \plC_t\to \Delta$ is realizable. Letting $t\to 0$ we obtain a continuous family of realizable tropical curves whose limit is $f$. Since the limiting map must be realizable by Theorem~\ref{thm: limiting-realizability}, the result follows.
\qed

\begin{remark}\label{SpeyerRemark}
David Speyer has communicated to us a different approach to the proof of the above result using techniques akin to those in~\cite{Sp07}. We record the the argument here, in case it may be helpful to the reader. Let $[\plC\to\Delta]$ be a tropical stable map and $v\in \plC$ the genus $1$ vertex, and assume $\mathrm{Star}(v)\to \Delta$ is realizable. Let $a_1, a_2, ..., a_m$ be the directions of the edges pointing outward from $v$ and write 
\[
a_i = (a_{i1}, a_{i2}, \ldots, a_{in}) \in \ZZ^n. 
\]
Let $K$ be a non-archimedean field such that the vertices of $\plC$ map to points of $\Delta$ that are rational over the value group. Let $R$ be the valuation ring and and $k$ the residue field. Since the link is realizable, we may build an elliptic curve $E$ over $k$ with points $z_1, z_2, ... z_m\in E(k)$ such that 
\[
\sum a_i z_i = 0 \in E^n.
\] 

Choose a lifting of $E$ to an elliptic curve $\mathscr E$ over $\spec(R)$ with reduction $E$ over $k$. By the arguments of~\cite[Section 7,8]{Sp07}, to lift the map $[\plC\to \Delta]$ we must lift the points $\{z_i\}$ to the generic fiber of $\mathscr E$ with prescribed relations, preserving $\sum a_i z_i=0$. This can be done by mimicking the calculations in Speyer's original argument, where the lifts of the points $z_i$ lie in the fibers of the reduction map $\mathscr E(R)\to E(k)$. By~\cite[Theorem 6.4]{Silv86}, since we work in residue characteristic $0$, the fibers of this map are isomorphic to the additive group $R^\times$. The calculations are now identical to those in~\cite[Lemma 8.3]{Sp07} and the result follows.
\end{remark}

\begin{remark}
Let $\plC\to \RR^n$ be a genus $1$ tropical stable map as in the statement of Theorem~\ref{thm: good-reduction} and let $p$ be the genus $1$ vertex. We have chosen to formulate the result with the hypothesis that the star of $p$ in the modified curve $\hat\plC\to \RR^n$ is realizable rather than placing a hypothesis on $\plC\to \RR^n$ itself. This choice has been made because the local realizability of $\hat\plC\to \RR^n$ is often easier to check, since one has an explicit coordinate on the nodal $\PP^1$ in the special fiber corresponding to $p$, c.f.~\cite[Proposition 2.8]{CFPU}. One could instead impose that the star of $p$ in $\plC\to \RR^n$ is realizable, and the result continues to hold, as seen in the remark above.
\end{remark}

\subsection{Proof of Theorem~\ref{thm: non-well-spaced}} For the reader's benefit, a ``proof-by-picture'' is given in Figure~\ref{fig: non-well-spaced} below. Let $[\plC\to \Delta]$ be a genus $1$ tropical curve whose cycle is contained in a hyperplane $H$. Assume that this map is well-spaced. Furthermore, assume that the minimum of the distances in Definition~\ref{def: well-spaced} is $0$ and occurs exactly twice. It follows that there exist two vertices $v_1$ and $v_2$, belonging to the cycle $L$, where $\plC$ leaves $H$. Choose a path in $L$ between $v_1$ and $v_2$ and any family of tropical curves $[\plC_t\to \Delta]$ for ${t\in [0,1)}$ such that the distance between $v_1$ and $v_2$ is $(1-t)$. Observe that for each value of $t<1$, the tropical map $[\plC_t\to \Delta]$ is well-spaced, and thus realizable by~\cite[Theorem 3.4]{Sp07}. However, in the limiting map, $\plC_1\to \Delta$, there is a unique point on $L$ at which the cycle $L$ leaves the hyperplane $H$. It follows that the limiting map cannot be well-spaced. However, the map is realizable by Theorem~\ref{thm: limiting-realizability}. 

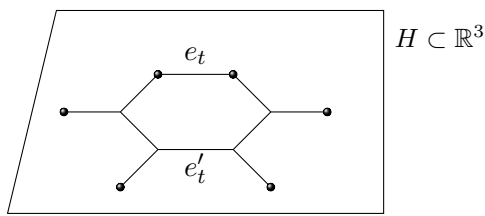
\begin{figure}[h!]
\begin{tikzpicture}
\node at (3.25,1) {\small $H\subset \RR^3$};
\node at (0,0.75) {$e_t$};
\node at (0,-0.75) {$e'_t$};

\draw (-1,0)--(-0.5,0.5)--(0.5,0.5)--(1,0);
\draw (-1,0)--(-0.5,-0.5)--(0.5,-0.5)--(1,0);
\draw (-1,0)--(-1.75,0); 
\draw (1,0)--(1.75,0); 
\draw (-0.5,-0.5)--(-1,-1); 
\draw (0.5,-0.5)--(1,-1); 

\draw (-2.5,-1.35)--(-1.85,1.35)--(2.5,1.35)--(2.5,-1.35)--(-2.5,-1.35);

\draw  [ball color =black] (-0.5,0.5) circle (0.5mm);
\draw  [ball color =black] (0.5,0.5) circle (0.5mm);

\draw  [ball color =black] (-1.75,0) circle (0.5mm);
\draw  [ball color =black] (1.75,0) circle (0.5mm);
\draw  [ball color =black] (-1,-1) circle (0.5mm);
\draw  [ball color =black] (1,-1) circle (0.5mm);
\end{tikzpicture}
\caption{Shown in the figure above is a connected component of the intersection of an embedded tropical curve in $\RR^3$ with a plane $H$. The black vertices indicate the points at which the cycle component leaves $H$. The edges labeled $e_t$ and $e'_t$ have length equal to $(1-t)$.}\label{fig: non-well-spaced}
\end{figure}

\qed

\bibliographystyle{siam} 
\bibliography{Superabundant}

\end{document}